\documentclass[10pt]{article}
\usepackage[usenames,dvipsnames]{color}
\usepackage{amsmath,amsthm,amsfonts,amssymb,multicol,amscd,amsbsy}
\usepackage{graphicx}
\usepackage{booktabs}
\usepackage{subfigure}
\usepackage{enumerate}
\usepackage{xcolor}
\usepackage{url}
\usepackage[nodayofweek]{datetime}
\usepackage[english]{babel}
\usepackage[toc,page]{appendix}
\usepackage{verbatim} 
\usepackage{amsthm} 
\usepackage{enumitem}
\usepackage{enumerate}
\usepackage{bbm} 
\usepackage[normalem]{ulem} 
\usepackage{bm}
\usepackage{hyperref}
\usepackage{amsmath}
\usepackage{mathtools}

\usepackage{extarrows}
\usepackage[normalem]{ulem}
\usepackage[margin=1in]{geometry}

\usepackage{enotez}
\let\footnote=\endnote

\usepackage{pgfplots}
\usepackage{caption}


\usepackage{authblk}

\def\1{{\mathsf 1}}

\newtheorem{thm}{Theorem}[section]
\newtheorem{lem}[thm]{Lemma}

\newtheorem{prop}[thm]{Proposition}
\newtheorem*{remark*}{Remark}

\theoremstyle{definition}

\numberwithin{equation}{section}


\def\dotuline{\bgroup
  \ifdim\ULdepth=\maxdimen  
   \settodepth\ULdepth{(j}\advance\ULdepth.4pt\fi
  \markoverwith{\begingroup
  \advance\ULdepth0.08ex
  \lower\ULdepth\hbox{\kern.15em .\kern.1em}%
  \endgroup}\ULon}

\def\dashuline{\bgroup
  \ifdim\ULdepth=\maxdimen  
   \settodepth\ULdepth{(j}\advance\ULdepth.4pt\fi
  \markoverwith{\kern.15em
  \vtop{\kern\ULdepth \hrule width .3em}%
  \kern.15em}\ULon}
\allowdisplaybreaks

\usepackage{authblk}

\begin{document}

\title{The Multiple Points of Fractional Brownian Motion}
\author{Mark Landry\footnote{Michigan State University. Email:\href{mailto: landrym5@msu.edu}{ landrym5@msu.edu}}, Cheuk Yin Lee\footnote{Michigan State University. Email:\href{mailto: leecheu15@msu.edu}{ leecheu1@msu.edu}}, Paige Pearcy\footnote{University of Arizona. Email:\href{mailto: paigepearcy@email.arizona.edu}{ paigepearcy@email.arizona.edu}}}

\setlength{\columnsep}{5pt}
\date{}
\maketitle

\begin{abstract}
Nils Tongring (1987) proved sufficient conditions for a compact set to contain $k$-tuple points of a Brownian motion.
In this paper, we extend these findings to the fractional Brownian motion.
Using the property of strong local nondeterminism, we show that if $B$ is a fractional Brownian 
motion in $\mathbb{R}^d$ with Hurst index $H$ such that $Hd=1$, and $E$ is a fixed, 
nonempty compact set in $\mathbb{R}^d$ with positive capacity with respect to the  function $\phi(s) = (\log_+(1/s))^k$, then $E$ contains $k$-tuple points with 
positive probability. 
For the $Hd > 1$ case, the same result holds with the function replaced by $\phi(s) = s^{-k(d-1/H)}$.
\end{abstract}

\section{Introduction}

It is a well known result that almost all sample paths of Brownian motion in $\mathbb{R}^2$ have $k$-tuple points for any positive integer $k$ \cite{DEK}. It is also well known that a Brownian path in $\mathbb{R}^2$ will hit a compact set if and only if the set has positive logarithmic capacity \cite{K44}. Nils Tongring \cite{Tongring} combined these results in 1987 to prove that if $B$ is a Brownian motion in $\mathbb{R}^2$, and $E$ is a fixed compact set in the plane with positive capacity with respect to the function $\phi(s)=(\log(1/s))^k$, $k$ a positive integer, then $E$ contains $k$-tuple points for almost all paths.
We say that $x$ is a \emph{$k$-tuple point} (or \emph{$k$-multiple point}) for the path $\omega$ if there are times $t_1 < t_2 < \cdots < t_k$ such that $x = B(t_1, \omega) = B(t_2, \omega) = \cdots = B(t_k, \omega)$. 

In this paper, we extend Tongring's results to the case of fractional Brownian motion, which is known as an extension of Brownian motion with the dropped required property of independent increments of Brownian motion. The fractional Brownian motion of Hurst index $0 < H < 1$ is defined as a mean zero Gaussian process $\{B_t : t \ge 0\}$ with continuous sample paths and covariance function
$\mathbb{E}(B_t B_s)=(|t|^{2H}+|s|^{2H}-|t-s|^{2H})/2$.
The fractional Brownian motion in $\mathbb{R}^d$ is defined as $B_t = (B_t^1, \dots, B^d_t)$ where $B^1, \dots, B^d$ are i.i.d.~copies of one-dimensional fractional Brownian motion.
The fractional Brownian motion has the scaling property that for any $c > 0$, the process $\{ c^{-H} B_{ct} : t \ge 0 \}$ is also a fractional Brownian motion.

The existence of multiple points for fractional Brownian motion was studied by several authors. The results of K\^{o}no \cite{K78}, Goldman \cite{G81} and Rosen \cite{R84} show that if $k > (k-1)Hd$, then $k$-tuple points exist in any interval $T \subset (0, \infty)$; and if $k < (k-1)Hd$, then $k$-tuple points do not exist.
Talagrand \cite{T98} proved that in the critical case $k = (k-1)Hd$, $k$-tuple points do not exist. In fact, their results cover the multiparameter case $(t \in \mathbb{R}^N)$.
Rosen \cite{R84} also studied the Hausdorff dimension of multiple times using local times.

We study sufficient conditions for a fixed set $E$ in $\mathbb{R}^d$ to contain multiple points of a fractional Brownian motion in $\mathbb{R}^d$.
Our main results are Theorem \ref{ktuplepointsHd=1} and \ref{ktupleHd>1}, concerning the cases $Hd = 1$ and $Hd > 1$ respectively.
We show that if $Hd = 1$ (resp.~$Hd>1$) and $E$ is a fixed, nonempty compact set in 
$\mathbb{R}^d$ with positive capacity with respect to the function 
$\phi(s) = (\log_+(1/s))^k$ (resp.~$\phi(s) = s^{-k(d-1/H)}$), then $E$ contains 
$k$-tuple points with positive probability. To avoid negative values of the logarithm, we define $\log_+(x) = \max\{\log(x), 0\}$.
Our results extend Theorem 1 and 3 of Tongring \cite{Tongring}.

For the case that $Hd<1$, it is known that for any fixed point $x$ in $\mathbb{R}^d$, the Hausdorff dimension of the level set $B^{-1}(x) = \{ t \ge 0 : B(t) = x \}$ of the fractional Brownian motion is a.s.~equal to $1 - Hd$, which is positive (see \cite{A, K}).
It follows that that $B^{-1}(x)$ contains uncountably many time points and therefore, $x$ is a multiple point of infinitely large multiplicity.

Unlike the Brownian motion, the fractional Brownian motion (for $H \ne 1/2$) does not have independent increments. 
In order to estimate the joint distributions, our proofs rely on the property of strong local nondeterminism for the fractional Brownian motion due to Pitt \cite{Pitt} (also see Proposition \ref{Prop:LND} below).
The concept of local nondeterminism (LND) was first introduced by Berman \cite{B73} to study local times of Gaussian processes. 
We refer to \cite{B87, C78, Pitt} for general conditions of LND for Gaussian processes.
LND is useful for studying small ball probability, H\"{o}lder conditions of local times, Hausdorff measure of ranges and level sets, and other properties of Gaussian random fields (see e.g.~\cite{X97, X08}).

Let us explain notations and gather useful formulas before we end the introduction.
For any nonempty Borel set $E$ in $\mathbb{R}^d$, the \emph{capacity} of $E$ respect to a function $\phi: [0, \infty) \to [0, \infty]$, or the \emph{$\phi$-capacity}, can be defined as the quantity
\[ C_\phi(E) = \bigg[\inf_\mu \int_E \int_E \phi(|z_1 - z_2|) \mu(dz_1) \mu(dz_2)\bigg]^{-1} \]
where the infimum is taken over all probability measures $\mu$ on $E$.
It follows that $E$ has positive capacity with respect to $\phi$ if and only if there exists a probability measure $\mu$ on $E$ such that
\[ \int_E \int_E \phi(|z_1 - z_2|) \mu(dz_1) \mu(dz_2) < \infty.\]

Recall that for any mean zero Gaussian vector $(Z, Z_1, \dots, Z_n)$, the conditional variance of $Z$ given $Z_1, \dots, Z_n$ is equal to the squared $L^2(\mathbb{P})$-distance of $Z$ from the subspace generated by $Z_1, \dots, Z_n$, namely,
\begin{equation}\label{cond_var=L2_dist}
\mathrm{Var}(Z|Z_1, \dots, Z_n) = \inf_{a_1, \dots, a_n \in \mathbb{R}} \mathbb{E}\bigg[\Big(Z - \sum_{j=1}^n a_j Z_j\Big)^2 \bigg].
\end{equation}
The covariance matrix of the Gaussian vector $(Z_1, \dots, Z_n)$ is denoted by 
$\mathrm{Cov}(Z_1, \dots, Z_n)$.
A useful formula for the determinant of the covariance matrix is:
\begin{equation}\label{Eq:detcov}
\det\mathrm{Cov}(Z_1, \dots, Z_n) = \mathrm{Var}(Z_1) \prod_{j=2}^n \mathrm{Var}(Z_j|Z_1, \dots, Z_{j-1}).
\end{equation}
An oft-used result in this paper will be the following proposition.

\begin{prop}\label{Prop:LND}
Let $B$ be a fractional Brownian motion of Hurst index $H$. There exists a constant $0<C_0 \le1$ such that for all $n \ge 1$, for all $t, s_1, ..., s_n \ge 0$, 
\begin{equation*}
C_0 \min\limits_{1\le i \le n}|t-s_i|^{2H}\le \mathrm{Var}(B_t|B_{s_1},\cdots,B_{s_n})\le \min\limits_{1\le i \le n}|t-s_i|^{2H}.
\end{equation*}
\end{prop}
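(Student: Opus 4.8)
The plan is to combine the identity \eqref{cond_var=L2_dist} between conditional variance and squared $L^2$-distance with the harmonizable (spectral) representation of fractional Brownian motion. Since the $d$ coordinates of $B$ are i.i.d.\ one-dimensional fBm's, the conditional variance of one coordinate given the observations equals its conditional variance given the corresponding coordinates, so it suffices to treat $B$ as one-dimensional. The upper bound is then immediate: taking $a_{i^*}=1$ for an index $i^*$ achieving $\min_i|t-s_i|$ and all other $a_j=0$ in \eqref{cond_var=L2_dist} gives $\mathrm{Var}(B_t|B_{s_1},\dots,B_{s_n})\le \mathbb{E}[(B_t-B_{s_{i^*}})^2]=|t-s_{i^*}|^{2H}=\min_i|t-s_i|^{2H}$, which in particular forces any admissible $C_0$ to satisfy $C_0\le 1$.

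For the lower bound, the substantive part, I would pass to the spectral side. Using the standard harmonizable representation $B_t=c_H\int_{\mathbb{R}}(e^{it\lambda}-1)|\lambda|^{-(H+1/2)}\,\widetilde W(d\lambda)$, the assignment $B_\tau\mapsto e^{i\tau\lambda}-1$ extends to an isometry into $L^2(f\,d\lambda)$ with spectral density $f(\lambda)=c_H^2|\lambda|^{-(2H+1)}$. Hence, by \eqref{cond_var=L2_dist}, $\mathrm{Var}(B_t|B_{s_1},\dots,B_{s_n})$ is exactly the squared $L^2(f)$-distance from $e^{it\lambda}-1$ to $V:=\mathrm{span}\{e^{is_j\lambda}-1\}$. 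I would bound this distance below by duality: it equals $\sup\{|\langle e^{it\lambda}-1,w\rangle_f|^2/\|w\|_f^2:\ w\perp_f V\}$, so it suffices to exhibit a single $w$ with $\langle e^{is_j\lambda}-1,w\rangle_f=0$ for all $j$, $\langle e^{it\lambda}-1,w\rangle_f=1$, and $\|w\|_f$ of the right size.

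Such a $w$ comes from a time-domain bump. Set $r=\min_{0\le i\le n}|t-s_i|$, where I adjoin the time $s_0=0$; this is essential, since $B_0=0$ pins the process and without the origin the asserted bound fails as $t\to0$ (the minimum must be read as ranging over $0$ as well). Fix $\varphi\in C_c^\infty(\mathbb{R})$ with $\varphi(0)=1$ and $\mathrm{supp}\,\varphi\subset(-\tfrac12,\tfrac12)$, put $\beta(u)=\varphi((u-t)/r)$, and define $w$ by $\overline{w(\lambda)}f(\lambda)=\tfrac{1}{2\pi}\hat\beta(\lambda)$. Fourier inversion then gives $\langle e^{i\tau\lambda}-1,w\rangle_f=\beta(\tau)-\beta(0)$; since $\mathrm{supp}\,\beta=(t-\tfrac r2,t+\tfrac r2)$ excludes $0$ and every $s_j$ (each at distance $\ge r$ from $t$) while $\beta(t)=1$, the orthogonality and the normalization both hold. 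A scaling computation, using $\hat\beta(\lambda)=re^{-it\lambda}\hat\varphi(r\lambda)$ and a change of variables, yields $\|w\|_f^2=\tfrac{1}{(2\pi)^2c_H^2}\int|\hat\beta(\lambda)|^2|\lambda|^{2H+1}\,d\lambda=C_\varphi\,r^{-2H}$ with $C_\varphi<\infty$ because $\hat\varphi$ is Schwartz. Cauchy–Schwarz in $L^2(f)$, replacing $e^{it\lambda}-1$ by its component orthogonal to $V$, then gives $1=|\langle e^{it\lambda}-1,w\rangle_f|^2\le\mathrm{Var}(B_t|\cdots)\,\|w\|_f^2$, hence $\mathrm{Var}(B_t|\cdots)\ge C_\varphi^{-1}r^{2H}$, the desired bound with $C_0=\min\{C_\varphi^{-1},1\}$. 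The main obstacle is precisely this dual construction: one needs a single test element orthogonal to all $n$ conditioning directions simultaneously yet with norm of order $r^{-2H}$, and the bump supported in the gap around $t$ (away from $0$ and from every $s_j$) is what delivers this uniformly in $n$; the remaining points—the Fourier-inversion identity and integrability near $\lambda=0$, where $f$ blows up but $|e^{i\tau\lambda}-1|\lesssim|\lambda|$ compensates, and the finiteness of $C_\varphi$—are routine verifications.
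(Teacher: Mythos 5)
Your proposal is correct, but it does substantially more than the paper, which offers no proof of the lower bound at all: the paper only notes that the upper bound follows from \eqref{cond_var=L2_dist} together with $\mathbb{E}[(B_t-B_s)^2]=|t-s|^{2H}$, and cites Pitt's Lemma 7.1 for the lower bound. Your argument --- passing to the harmonizable representation, identifying the conditional variance with the squared $L^2(f)$-distance to $\mathrm{span}\{e^{is_j\lambda}-1\}$, and producing a dual test element from a bump $\beta(u)=\varphi((u-t)/r)$ supported in the gap around $t$ --- is the standard Fourier-analytic proof of strong local nondeterminism for fBm, and all the steps check out: the orthogonality $\langle e^{is_j\lambda}-1,w\rangle_f=\beta(s_j)-\beta(0)=0$, the normalization $\langle e^{it\lambda}-1,w\rangle_f=1$, the scaling $\|w\|_f^2=C_\varphi r^{-2H}$, and the Cauchy--Schwarz step are all valid (one could add a remark that the lower bound on the distance to the complex span also bounds the distance to the real span, and that the case $r=0$ is trivial, but these are cosmetic). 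Your observation about adjoining $s_0=0$ is a genuine and important catch: as literally printed, with the minimum taken only over $1\le i\le n$, the proposition is false (take $n=1$, $s_1=1$, $t\to 0$: then $\mathrm{Var}(B_t\,|\,B_1)\le t^{2H}\to 0$ while $|t-s_1|^{2H}\to 1$), and the correct form of Pitt's lemma reads $\mathrm{Var}(B_t|B_{s_1},\dots,B_{s_n})\ge C_0\min_{0\le i\le n}|t-s_i|^{2H}$ with $s_0=0$. This correction is harmless for the paper's applications, where the conditioned time always satisfies $s_j\ge s_j-\widehat{s}_{j-1}$ and the like, so the term $|t-0|^{2H}$ never realizes the minimum; but your version is the one that is actually true, and your proof establishes it.
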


The upper bound follows easily from \eqref{cond_var=L2_dist} and the fact that $\mathbb{E}[(X_t - X_s)^2] = |t-s|^{2H}$.
The lower bound was proved by Pitt \cite[Lemma 7.1]{Pitt} and is known as the property of strong local nondeterminism. 
Formula \eqref{Eq:detcov} and Proposition \ref{Prop:LND} together allow us to estimate the joint distributions of fractional Brownian motion.

Throughout this paper, $C$ denotes a positive finite constant whose value may vary in each appearance, and $C_1, C_2, \dots$ denote specific constants.
For any $z \in \mathbb{R}^d$, $|z|$ denotes its Euclidean norm.
Also, $D(z, \varepsilon)$ denotes the closed ball in $\mathbb{R}^d$ centered at $z$ of radius $\varepsilon$ under the Euclidean norm, i.e.~$D(z, \varepsilon) = \{ x \in \mathbb{R}^d : |x-z| \le \varepsilon \}$.

\section{Multiple Points of Fractional Brownian Motion with $Hd = 1$}

\begin{lem}\label{cap_scaling}
Let $E$ be a Borel set in $\mathbb{R}^d$ and $\phi(s) = (\log_+(1/s))^k$, where $k$ is a positive integer.
If $E$ has positive $\phi$-capacity, then $\lambda E$ has positive $\phi$-capacity for any $\lambda > 0$.
\end{lem}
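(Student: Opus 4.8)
The plan is to transport a good measure from $E$ to $\lambda E$ via the scaling map and to control how $\phi$ distorts under scaling. By the characterization of positive capacity recalled in the introduction, positivity of the $\phi$-capacity of $E$ means there is a probability measure $\mu$ on $E$ with finite energy $\int_E \int_E \phi(|z_1-z_2|)\,\mu(dz_1)\,\mu(dz_2) < \infty$. I would let $T_\lambda(z) = \lambda z$ and take $\mu_\lambda = (T_\lambda)_*\mu$, the pushforward of $\mu$ under $T_\lambda$. This is automatically a probability measure supported on $\lambda E$, so it suffices to show it has finite energy. By the change-of-variables formula for pushforwards, the energy of $\mu_\lambda$ equals
\[
\int_{\lambda E}\int_{\lambda E} \phi(|w_1-w_2|)\,\mu_\lambda(dw_1)\,\mu_\lambda(dw_2) = \int_E\int_E \phi(\lambda|z_1-z_2|)\,\mu(dz_1)\,\mu(dz_2),
\]
so everything reduces to showing that the scaled energy on the right is finite.

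The heart of the argument is a pointwise comparison between $\phi(\lambda s)$ and $\phi(s)$. I would establish that there is a finite constant $C_\lambda$ (depending on $\lambda$ and $k$) such that $\phi(\lambda s) \le C_\lambda(\phi(s) + 1)$ for every $s > 0$. If $\lambda \ge 1$ this is immediate: then $\lambda s \ge s$, so $\log_+(1/(\lambda s)) \le \log_+(1/s)$ and hence $\phi(\lambda s) \le \phi(s)$ by monotonicity, and one may take $C_\lambda = 1$. The substantive case is $0 < \lambda < 1$, where I would use the fact that the logarithm converts the multiplicative scaling into an additive shift: writing $a = \log(1/\lambda) > 0$, for $s$ small enough that both arguments exceed $1$ one has $\log_+(1/(\lambda s)) = \log(1/s) + a$, and the remaining ranges ($1 \le s < 1/\lambda$, where $\phi(s)=0$ but $\phi(\lambda s) \le a^k$, and $s \ge 1/\lambda$, where both vanish) are handled by hand.

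Having reduced to $(\log(1/s)+a)^k$ versus $(\log(1/s))^k$, I would invoke the elementary power inequality $(x+y)^k \le 2^{k-1}(x^k+y^k)$ for $x,y \ge 0$ (a consequence of the convexity of $t \mapsto t^k$) with $x = \log_+(1/s)$ and $y = a$, yielding $\phi(\lambda s) \le 2^{k-1}(\phi(s) + a^k)$ on the relevant range; combining with the boundary ranges gives the desired bound $\phi(\lambda s) \le C_\lambda(\phi(s)+1)$ with, say, $C_\lambda = 2^{k-1}\max\{1,a^k\}$. Integrating this inequality against $\mu \otimes \mu$ and using that $\mu$ is a probability measure (so $\int_E\int_E 1\,d\mu\,d\mu = 1$) gives
\[
\int_E\int_E \phi(\lambda|z_1-z_2|)\,\mu(dz_1)\,\mu(dz_2) \le C_\lambda\Big(\int_E\int_E \phi(|z_1-z_2|)\,\mu(dz_1)\,\mu(dz_2) + 1\Big) < \infty,
\]
so $\mu_\lambda$ has finite energy and $\lambda E$ has positive $\phi$-capacity. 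I expect the only delicate point to be the bookkeeping around the truncation in $\log_+$ and the small-$\lambda$ case: one must verify the comparison separately on the three ranges of $s$ rather than naively manipulating $\log(1/(\lambda s)) = \log(1/s) + a$, which can be negative once $s$ is not small. Everything else is routine once the pointwise bound is in hand.
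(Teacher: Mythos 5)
Your proposal is correct and follows essentially the same route as the paper's proof: pushing $\mu$ forward under $z \mapsto \lambda z$, changing variables to reduce to comparing $\phi(\lambda s)$ with $\phi(s)$, handling $\lambda \ge 1$ by monotonicity, and for $\lambda < 1$ splitting according to whether $|z_1 - z_2| \le 1$ and applying $(x+y)^k \le 2^{k-1}(x^k + y^k)$. The only difference is cosmetic — you package the case analysis as a pointwise bound $\phi(\lambda s) \le C_\lambda(\phi(s)+1)$ before integrating, while the paper splits the integral directly — and your attention to the truncation in $\log_+$ is exactly the right delicate point.
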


\begin{proof}
Suppose that $E$ has positive $\phi$-capacity and $\mu$ is a probability measure on $E$ such that
\begin{equation}\label{finite_energy}
\int_E \int_E \bigg(\log_+\frac{1}{|z_1-z_2|}\bigg)^k \mu(dz_1) \mu(dz_2) < \infty.
\end{equation}
Define the probability measure $\mu_\lambda$ on $\lambda E$ by $\mu_\lambda(A) = \mu(\lambda^{-1}A)$ for any Borel subset $A$ of $\lambda E$. We claim that
\begin{equation}\label{finite_energy_scaled}
\int_{\lambda E} \int_{\lambda E} \bigg(\log_+\frac{1}{|z_1-z_2|}\bigg)^k \mu_\lambda(dz_1) \mu_\lambda(dz_2) < \infty. 
\end{equation}
By changing variables, 
\[ \int_{\lambda E} \int_{\lambda E} \bigg(\log_+\frac{1}{|z_1-z_2|}\bigg)^k \mu_\lambda(dz_1) \mu_\lambda(dz_2) = \int_E \int_E \bigg(\log_+\frac{1}{\lambda|z_1-z_2|}\bigg)^k \mu(dz_1) \mu(dz_2). \]
If $\lambda \ge 1$, then $\log_+(\frac{1}{\lambda|z_1-z_2|}) \le \log_+(\frac{1}{|z_1-z_2|})$ and \eqref{finite_energy_scaled} clearly holds. Suppose $\lambda < 1$. Then
\begin{align*}
\int_E \int_E \bigg(\log_+\frac{1}{\lambda|z_1-z_2|}\bigg)^k \mu(dz_1) \mu(dz_2)
&= \iint_{\substack{z_1, z_2 \in E, \\|z_1 - z_2| \le \lambda^{-1}}} \bigg(\log \frac{1}{\lambda|z_1-z_2|}\bigg)^k \mu(dz_1) \mu(dz_2)\\
& = \iint_{\substack{z_1, z_2 \in E, \\|z_1 - z_2| \le \lambda^{-1}}} \bigg(\log \frac{1}{\lambda} + \log \frac{1}{|z_1-z_2|}\bigg)^k \mu(dz_1) \mu(dz_2).
\end{align*}
We can split the last integral into two parts: 
\begin{align*}
\Bigg(\iint_{\substack{z_1, z_2 \in E, \\|z_1 - z_2| \le 1}} + \iint_{\substack{z_1, z_2 \in E, \quad\\1 < |z_1 - z_2| \le \lambda^{-1}}} \Bigg) \bigg(\log \frac{1}{\lambda} + \log \frac{1}{|z_1-z_2|}\bigg)^k \mu(dz_1) \mu(dz_2).
\end{align*}
For the first part, we can use the inequality $(a+b)^k \le 2^{k-1}(a^k+b^k)$ for $a, b \ge 0$ and $k \ge 1$. For the second part, we note that $0 \le \log(1/\lambda) - \log |z_1-z_2| \le \log(1/\lambda)$. It follows that
\begin{align*}
\int_E \int_E \bigg(\log_+\frac{1}{\lambda|z_1-z_2|}\bigg)^k \mu(dz_1) \mu(dz_2) 
&\le 2^{k-1} \iint_{\substack{z_1, z_2 \in E, \\|z_1 - z_2| \le 1}} \Bigg[\left(\log\frac{1}{\lambda}\right)^k + \bigg( \log\frac{1}{|z_1-z_2|}\bigg)^k \Bigg]\mu(dz_1) \mu(dz_2) \\
&\quad + \iint_{\substack{z_1, z_2 \in E,\quad\\1 < |z_1 - z_2| \le \lambda^{-1}}} \left(\log \frac{1}{\lambda}\right)^k \mu(dz_1) \mu(dz_2).
\end{align*}
The right-hand side is finite by \eqref{finite_energy}. Hence \eqref{finite_energy_scaled} holds and $\lambda E$ has positive $\phi$-capacity.
\end{proof}

The following is our main result for the case of $Hd=1$.

\begin{thm}
\label{ktuplepointsHd=1}
If B is a fractional Brownian motion in $\mathbb{R}^d$ with Hurst index $H$ such that $Hd=1$, and $E$ is a fixed, nonempty compact set in $\mathbb{R}^d$ with positive capacity with respect to the function $\phi(s) = (\log_+(1/s))^k$, then E contains $k$-tuple points with positive probability.
\end{thm}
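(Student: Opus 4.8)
The plan is to use the second moment method on an occupation-type functional that counts approximate $k$-tuple points of $E$. By the scaling property of fractional Brownian motion and Lemma \ref{cap_scaling}, I may first rescale so that $\mathrm{diam}(E) \le 1$ without destroying the hypothesis $C_\phi(E) > 0$, so that $\log_+(1/|x-y|) = \log(1/|x-y|) \ge 0$ for all $x,y \in E$. Fix disjoint compact intervals $I_1 < I_2 < \cdots < I_k$ in $(0,\infty)$ separated by a positive distance, and let $\mu$ be a probability measure on $E$ with finite $\phi$-energy, which exists because $C_\phi(E) > 0$. For $\varepsilon > 0$ I set
\[ I_\varepsilon = (2\varepsilon)^{-kd}\int_E \int_{I_1}\cdots\int_{I_k} \prod_{j=1}^k \mathbf 1\{|B(t_j) - x| \le \varepsilon\}\,dt_1\cdots dt_k\,\mu(dx), \]
which detects points of $E$ approximately visited by $B$ with one time in each interval. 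The goal is to bound $\mathbb{E}[I_\varepsilon]$ below and $\mathbb{E}[I_\varepsilon^2]$ above, uniformly in $\varepsilon$, and then apply the Paley--Zygmund inequality together with a weak-limit argument.

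For the first moment, since $B^1,\dots,B^d$ are i.i.d., the joint law of $(B(t_1),\dots,B(t_k))$ factors over coordinates, so $(2\varepsilon)^{-kd}\,\mathbb{P}(|B(t_j)-x|\le\varepsilon\ \forall j)$ tends as $\varepsilon\to 0$ to a constant multiple of the Gaussian density at $(x,\dots,x)$, namely $(2\pi)^{-kd/2}\Delta_k^{-d/2}\exp(-\tfrac12 Q)$ with $\Delta_k = \det\mathrm{Cov}(B^1(t_1),\dots,B^1(t_k))$. By \eqref{Eq:detcov} and Proposition \ref{Prop:LND}, ordering $t_1<\cdots<t_k$ gives $\Delta_k \asymp (t_1\prod_{j=2}^k (t_j - t_{j-1}))^{2H}$, hence $\Delta_k^{-d/2}\asymp (t_1\prod_{j=2}^k (t_j-t_{j-1}))^{-1}$ since $Hd = 1$. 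Because the intervals are separated and $E$ is compact, $\Delta_k^{-d/2}$ and $\exp(-Q/2)$ remain bounded between positive constants, so $\mathbb{E}[I_\varepsilon]$ converges to a positive finite limit and is thus bounded below uniformly for small $\varepsilon$.

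The heart of the argument is the second moment. Writing $t=(t_1,\dots,t_k)$, $s=(s_1,\dots,s_k)$ in $\prod_j I_j$, the task reduces to bounding $\iint p_{t,s}(x,\dots,x,y,\dots,y)\,dt\,ds$, the integral of the $2k$-point joint density at the configuration carrying $k$ copies of $x$ and $k$ copies of $y$. I would order the $2k$ times by setting $u_j = \min(t_j,s_j)$ and $v_j = \max(t_j,s_j)$ in each $I_j$, so that $u_1\le v_1 < u_2\le v_2 < \cdots < u_k\le v_k$. Factoring the density over coordinates and applying the sequential decomposition \eqref{Eq:detcov} with Proposition \ref{Prop:LND}, each conditional variance is comparable to the squared gap to the preceding ordered time, and the density is controlled by a product of $2k$ Gaussian factors comparing consecutive targets. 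The decisive terms are the $k$ within-interval gaps $r_j = v_j - u_j$: since $u_j$ and $v_j$ carry the two different targets, each contributes a factor $\asymp r_j^{-Hd}\exp(-c|x-y|^2/r_j^{2H}) = r_j^{-1}\exp(-c|x-y|^2/r_j^{2H})$, whereas the between-interval gaps are bounded below by the separation and contribute only constants. Integrating each $r_j$ gives
\[ \int_0^{|I_j|} \frac{1}{r}\exp\Big(-\frac{c|x-y|^2}{r^{2H}}\Big)\,dr \asymp \log\frac{1}{|x-y|}, \]
and the product over $j=1,\dots,k$ reproduces exactly $\phi(|x-y|) = (\log(1/|x-y|))^k$. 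Thus $\iint p_{t,s}\,dt\,ds \le C\,\phi(|x-y|) + C$, and the finite-energy property of $\mu$ yields $\mathbb{E}[I_\varepsilon^2] \le C\int_E\int_E \phi(|x-y|)\,\mu(dx)\mu(dy) + C < \infty$, uniformly in $\varepsilon$.

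With both moment bounds established, Paley--Zygmund gives $\mathbb{P}(I_\varepsilon > \alpha) \ge c > 0$ for some $\alpha > 0$ independent of $\varepsilon$. To finish, I would regard $I_\varepsilon = \nu_\varepsilon(E)$ for the random measures $\nu_\varepsilon$ on $E$ defined by the same formula with $E$ replaced by a Borel set; their total masses are bounded in $L^2$, so along a subsequence $\nu_\varepsilon$ converges weakly to a random measure $\nu$, and uniform integrability gives $\mathbb{P}(\nu(E)>0)\ge c$. Continuity of $B$ and compactness of $\prod_j I_j$ then force every point of $\mathrm{supp}(\nu)$ to be hit by $B$ at one time in each $I_j$; as the intervals are disjoint these times are distinct, so on $\{\nu(E)>0\}$ the set $E$ contains a $k$-tuple point. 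I expect the main obstacle to be the second-moment density estimate: controlling the conditional means in the sequential factorization so that the exponents genuinely measure differences of consecutive targets, thereby isolating precisely the $k$ logarithmic factors while verifying that all remaining factors stay bounded.
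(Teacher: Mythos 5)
Your proposal is correct and follows essentially the same route as the paper: the same second-moment method on the same occupation functional over separated time intervals, the same use of \eqref{Eq:detcov} with Proposition \ref{Prop:LND} to control the joint density, and the same key computation in which each within-interval gap contributes $r^{-1}\exp(-c|x-y|^2/r^{2H})$ whose integral produces one logarithmic factor. The only point you gloss over --- handled in the paper by splitting the second moment into the regimes $|z_1-z_2|\le 4\varepsilon$ and $|z_1-z_2|>4\varepsilon$ --- is that for nearly coincident targets the probability must be bounded by the supremum of the density over the product of balls rather than its value at the centers, but you correctly flag this as the remaining technical step and it does not change the argument.
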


\begin{proof}
Suppose that $E$ has positive capacity with respect to the function $\phi(s) = (\log_+(1/s))^k$. That is, there is a probability measure $\mu$ defined on $E$ such that
\begin{equation}\label{assump}
\int_{E}\int_{E}\bigg(\log_+\frac{1}{|z_1-z_2|}\bigg)^k\mu(dz_1)\mu(dz_2) < \infty.
\end{equation}
By Lemma \ref{cap_scaling} and the scaling property of the fractional Brownian motion, we can assume without loss of generality that the set $E$ is contained in a ball of radius $1/3$ centered at the origin, so that $|z_1 - z_2| \le 2/3$ and $\log_+(1/|z_1 - z_2|)$ has a positive lower bound: 
\begin{equation}\label{Eq:log_lower_bd}
\log_+(1/|z_1 - z_2|) \ge \log(3/2) > 0, \quad \forall\, z_1, z_2 \in E.
\end{equation}

Let $u \in \mathbb{R}^+$. Let $\mathbf{K}_\varepsilon(u)$ be an indicator function such that
\[ 
\mathbf{K}_\varepsilon(u) = \begin{cases} 
1, & u\leq \varepsilon, \\
0, & u> \varepsilon.
\end{cases}
\]
For $0 < \varepsilon < 1$, consider the following integral:
\begin{align}\label{Int:I_epsilon}
I_{\varepsilon}= \frac{1}{\varepsilon^{kd}}\int_{E}\mu(dz) \prod_{j=1}^k \int_{2j-1}^{2j}ds_j \textbf{K}_\varepsilon({|B_{s_j}-z|}).
\end{align}
To prove positive probability of $k$-tuple points in the set $E$, we will show that $\mathbb{P}(I_\varepsilon>0)$ has a positive lower bound independent of $\varepsilon$. 
Since $I_\varepsilon$ is non-negative, $\mathbb{E}(I_\varepsilon) = \mathbb{E}(I_\varepsilon \cdot {\bf 1}_{\{ I_\varepsilon > 0 \}})$.
By the Cauchy--Schwarz inequality, we have
\begin{equation}\label{Eq:CS}
\mathbb{P}(I_\varepsilon>0)\ge\frac{\mathbb{E}(I_\varepsilon)^2}{\mathbb{E}(I_\varepsilon^2)}.
\end{equation}
To prove that right-hand side is positive, we show that the numerator is bounded below, and the denominator is bounded above; both bounds are also independent of $\varepsilon$.

First we prove a lower bound for $\mathbb{E}(I_\varepsilon)^2$.
By Fubini's theorem,
\begin{equation}\label{Eq:E(I_ep)}
\mathbb{E}(I_\varepsilon)= \frac{1}{\varepsilon^{kd}}\int_{E}\mu(dz) \int_1^2 ds_1 \int_3^4 ds_2 \cdots \int_{2k-1}^{2k} ds_k\, \mathbb{P}\bigg( \bigcap_{j=1}^k \big\{|B_{s_j}-z|\le\varepsilon\big\}\bigg).
\end{equation}
Let $s_j \in [2j-1, 2j]$ and write $B_{s_j} = (B_{s_j}^1, \dots, B_{s_j}^d)$ for $j = 1, \dots, k$.
Let $\Sigma_0 = \mathrm{Cov}(B_{s_1}^1, \dots, B_{s_k}^1)$ be the covariance matrix of the $k$-dimensional Gaussian vector $(B_{s_1}^1, \dots, B_{s_k}^1)$, and let $\Sigma = \mathrm{Cov}(B_{s_1}, \dots, B_{s_k})$ be the covariance matrix of the $kd$-dimensional Gaussian vector $(B_{s_1}, \dots, B_{s_k})$.
Then,
\begin{align*}
\mathbb{P}\bigg( \bigcap_{j=1}^k \big\{|B_{s_k} - z| \le \varepsilon \big\}\bigg) = \int\cdots\int_{D(z, \varepsilon)^k} \frac{1}{(2\pi)^{kd/2} (\det\Sigma)^{1/2}} 
\exp\left(-\frac{x \Sigma^{-1}x^T}{2} \right) dx_1 \cdots dx_k,
\end{align*} 
where $x$ denotes the row vector $(x_1, \dots, x_k)$, with $x_j \in \mathbb{R}^d$, and $x^T$ denotes its transpose.
Since $B^1, \dots, B^d$ are i.i.d., $\det\Sigma = (\det\Sigma_0)^d$, and by \eqref{Eq:detcov}, we have 
\[\det\Sigma_0 = \mathrm{Var}(B_{s_1}^1) \prod_{j=2}^k \mathrm{Var}(B_{s_j}^1|B_{s_1}^1, \dots, B_{s_{j-1}}^1).\]
Then by Proposition \ref{Prop:LND}, we get that
\[\det\Sigma \le [s_1^{2H} (s_2-s_1)^{2H} \cdots (s_k-s_{k-1})^{2H}]^d \le [2^{2H} 3^{2H(k-1)}]^d=: C_1. \]
Let $\lambda_1 \le \dots \le \lambda_k$ be the eigenvalues of the matrix $\Sigma_0$.
Then,
\[ x \Sigma^{-1} x^T \le \frac{1}{\lambda_1}(|x_1|^2 + \dots + |x_k|^2).  \]
Recall that the largest eigenvalue of a real symmetric $k\times k$ matrix $A$ is equal to $\sup\{ v A v^T : v \in \mathbb{R}^k, |v| = 1 \}$.
Since the entries of $\Sigma_0$ are bounded as a function of $(s_1, \dots, s_k)$ on the compact set $\prod_{j=1}^k[2j-1, 2j]$, the eigenvalues of $\Sigma_0$ are bounded by a constant $C_2$.
It follows that
\[ \det\Sigma_0 = \prod_{j=1}^k \lambda_j \le \lambda_1 C_2^{k-1}. \]
Also, by Proposition \ref{Prop:LND}, we have
\begin{align*}
\det\Sigma_0 &=\mathrm{Var}(B_{s_1}^1) \prod_{j=2}^k \mathrm{Var}(B_{s_j}^1|B_{s_1}^1, \dots, B_{s_{j-1}}^1)\\
& \ge s_1^{2H} \prod_{j=2}^k[C_0(s_j-s_{j-1})^{2H}] \ge C_0^{k-1}.
\end{align*}
Hence $\lambda_1\ge (C_0/C_2)^{k-1}$. Note that for $x_j \in D(z, \varepsilon)$, $|x_j|\le |z| + \varepsilon < 1$, and $|x_1|^2 + \dots + |x_k|^2 < k$, so
\begin{align*}
\mathbb{P}\bigg(\bigcap_{j=1}^k\big \{|B_{s_j}-z|\le \varepsilon\big\}\bigg) 
&\ge \frac{1}{C_1^{1/2}(2\pi)^{kd/2}} \int\dots \int_{D(z, \varepsilon)^k} \exp\bigg(-\frac{k}{2(C_0/C_2)^{k-1}}\bigg)dx_1\cdots dx_k
 = C \varepsilon^{kd}.
\end{align*}
Plugging this back into \eqref{Eq:E(I_ep)} gives $\mathbb{E}(I_\varepsilon)^2 \ge C_3$, where $C_3 > 0$ is a constant independent of $\varepsilon$.

Next we show that $\mathbb{E}(I_\varepsilon^2)$ has a constant upper bound independent of $\varepsilon$.
Following \cite{Tongring}, we divide $\mathbb{E}(I_\varepsilon^2)$ into two parts, $F$ and $S$. The first part $F$ is given by restricting the variables $z_1, z_2$ to values for which $|z_1 - z_2| \leq 4\varepsilon$, and the second part $S$ is given by the remaining  $z_1, z_2$ for which $|z_1 - z_2| > 4\varepsilon$. That is,
\[ \mathbb{E}(I_\varepsilon^2) = F + S,\]
where $F$ and $S$ are defined by
\begin{multline*}
F= \frac{1}{\varepsilon^{2kd}}\iint_{\substack{z_1, z_2 \in E,\\ |z_1 - z_2| \le 4\varepsilon}} d\mu (z_1) d\mu (z_2) \iint_{[1,2]^2} ds_1 d\widehat{s}_1 \iint_{[3, 4]^2} ds_2 d\widehat{s}_2\\\cdots
\iint_{[2k-1, 2k]^2}ds_k d\widehat{s}_k \,\mathbb{P}\bigg(\bigcap_{j=1}^k\big\{|B_{s_j}-z_1|\le\varepsilon,|B_{\widehat{s}_j}-z_2|\le\varepsilon\big\}\bigg)
\end{multline*}
and
\begin{multline*}
S= \frac{1}{\varepsilon^{2kd}}\iint_{\substack{z_1, z_2 \in E,\\ |z_1-z_2| > 4\varepsilon}} d\mu (z_1)d\mu (z_2) \iint_{[1,2]^2} ds_1 d\widehat{s}_1 \iint_{[3, 4]^2} ds_2 d\widehat{s}_2\\\cdots
\iint_{[2k-1, 2k]^2}ds_k d\widehat{s}_k \,\mathbb{P}\bigg(\bigcap_{j=1}^k\big\{|B_{s_j}-z_1|\le\varepsilon,|B_{\widehat{s}_j}-z_2|\le\varepsilon\big\}\bigg).
\end{multline*}
    
Let us consider $S$. Assume $|z_1 - z_2| > 4\varepsilon$. First, we fix $s_j, \widehat s_j \in [2j-1, 2j]$ for $j =1, \dots, k$ and consider the joint probability that appears in $S$.
We may assume without loss of generality that $s_1<\widehat{s}_1 < \cdots <s_k<\widehat{s}_k$, because for other possible orderings, we can interchange $s_j$ and $\widehat s_j$, and follow the same argument. 
By the triangle inequality, the joint probability is
\begin{align*}
\mathbb{P}\bigg(\bigcap_{j=1}^k\big\{|B_{s_j}-z_1|\le\varepsilon,|B_{\widehat{s}_j}-z_2|\le\varepsilon\big\}\bigg)
 \le \mathbb{P}\bigg( \bigcap_{j=1}^k \big\{ |B_{s_j} - z_1| \le \varepsilon, |B_{\widehat{s}_j} - B_{s_j} - (z_2-z_1)| \le 2\varepsilon \big\} \bigg).
\end{align*}
Now, we normalize the random variables $B_{s_1}, B_{\widehat{s}_1} -  B_{s_1}, \dots, B_{s_k}, B_{\widehat{s}_k} -  B_{s_k}$ by dividing by their respective standard deviations. 
We will see that it allows us to obtain a lower bound involving their inverse covariance matrix.
Let
\[X_j=\frac{B_{s_j}}{s_j^H}, \quad \widehat{X}_j=\frac{B_{\widehat{s}_j}-B_{s_j}}{|\widehat{s}_j-s_j|^H} \]
for $ = 1, \dots, k$.
Define the vector $X_n = (X_n^1,X_n^2, ..., X_n^d)$. 
To simplify notations, let us denote 
\[y_j=\frac{z_1}{s_j^H}, \quad 
\widehat{y}_j=\frac{z_2-z_1}{|\widehat{s}_j-s_j|^H}\]
and
\[
\varepsilon_j = \frac{\varepsilon}{s_j^H}, \quad 
\widehat{\varepsilon}_j = \frac{\varepsilon}{|\widehat{s}_j-s_j|^H}
\]
for $j = 1, \dots, k$.
Then
\begin{align}\label{Eq:UBint}
\notag &\quad\, \mathbb{P}\bigg( \bigcap_{j=1}^k \big\{ |B_{s_j} - z_1| \le \varepsilon, |B_{\widehat{s}_j} - B_{s_j} - (z_2-z_1)| \le 2\varepsilon \big\} \bigg)\\
\notag&= \mathbb{P}\bigg( \bigcap_{j=1}^k \big\{ |X_j - y_j| \le \varepsilon_j, |\widehat{X}_j - \widehat{y}_j| \le 2\widehat{\varepsilon}_j \big\} \bigg)\\
& = \int\dots\int_D \frac{1}{(2\pi)^{kd}(\mathrm{det}\Sigma)^{1/2}}\;\exp\bigg({-\frac{x\Sigma^{-1}x^T}{2}}\bigg)dx_1d\widehat{x}_1\cdots dx_kd\widehat{x}_k
\end{align}
where $D = \prod_{j=1}^k [D(y_j, \varepsilon_j) \times D(\widehat{y}_j, 2\widehat{\varepsilon}_j)]$, $x = (x_1, \widehat{x}_1, \dots, x_k, \widehat{x}_k)$ and
\[ \Sigma = \mathrm{Cov}(X_1, \widehat{X}_1, \dots, X_k, \widehat{X}_k). \]
By \eqref{Eq:detcov}, $\mathrm{det}\Sigma$ is equal to
\[ \Big\{\mathrm{Var}(X_1^1)\mathrm{Var}(\widehat{X}_1^1|X_1^1)\prod_{j=2}^k\big[\mathrm{Var}(X_j^1|X_1^1, \widehat{X}_1^1, \dots, X_{j-1}^1, \widehat{X}_{j-1}^1) \mathrm{Var}(\widehat{X}_j^1|X_1^1, \widehat{X}_1^1, \dots, X_{j-1}^1, \widehat{X}_{j-1}^1, X_j^1)\big]\Big\}^d.\]
Clearly, $\mathrm{Var}(X^1_1) = 1$. 
To find lower bounds for the conditional variances, we use the property of local nondeterminism.
Using \eqref{cond_var=L2_dist} and Proposition \ref{Prop:LND}, we get that
\begin{align*}
\mathrm{Var}(\widehat{X}^1_1|X^1_1)
\ge \frac{1}{|\widehat{s}_1 - s_1|^{2H}}\mathrm{Var}(B^1_{\widehat{s}_1}|B^1_{s_1}) \ge  C_0.
\end{align*}
Similarly, for all $j = 2, \dots, k$, we have
\begin{align*}
\mathrm{Var}(X_j^1|X_1^1,\widehat{X}_1^1, \dots, X_{j-1}^1, \widehat{X}_{j-1}^1) 
& \ge \frac{1}{s_j^{2H}} \mathrm{Var}(B^1_{s_j}|B^1_{s_1}, B^1_{\widehat{s}_1}, \dots, B^1_{s_{j-1}}, B^1_{\widehat{s}_{j-1}})\\
& \ge \frac{C_0 |s_j - \widehat{s}_{j-1}|^{2H}}{s_j^{2H}} \ge \frac{C_0}{(2j)^{2H}}
\end{align*}
and
\begin{align*}
\mathrm{Var}(\widehat{X}_j^1|X_1^1,\widehat{X}_1^1, \dots, X_{j-1}^1, \widehat{X}_{j-1}^1, X_j^1) 
& \ge \frac{1}{|\widehat{s}_j-s_j|^{2H}} \mathrm{Var}(B^1_{\widehat{s}_j}|B^1_{s_1}, B^1_{\widehat{s}_1}, \dots, B^1_{s_{j-1}}, B^1_{\widehat{s}_{j-1}}, B^1_{s_j}) \ge C_0.
\end{align*}
It follows that 
\begin{equation}\label{Eq:detSigma}
\det\Sigma \ge C
\end{equation}
for some constant $C > 0$ independent of $\varepsilon$.

Let us consider the exponential function of the joint density in \eqref{Eq:UBint}.
Let $\lambda_{\mathrm{max}}$ denote the maximum eigenvalue of $\Sigma$.
Then for any $x = (x_1, \widehat{x}_1, \dots, x_k, \widehat x_k) \in D$,
\begin{align*}
x\Sigma^{-1}x^T 
& \ge \frac{1}{\lambda_{\mathrm{max}}}|x|^2
 \ge \frac{1}{\lambda_{\mathrm{max}}}(|\widehat x_1|^2 + \cdots + |\widehat x_k|^2).
\end{align*}
By normalizing the random variables, we are able to use Gershgorin's circle theorem \cite[Theorem 6.1.1]{HJ} to find this bound. Since the entries in the $\Sigma$ matrix are the correlation coefficients between the random variables, the diagonal entries are 1 and the sum of the absolute values of the non-diagonal entries in each row is bounded by $2k-1$, thus $\lambda_{\mathrm{max}} \le 1 + (2k-1) = 2k$. It follows that
\begin{equation}\label{Eq:UBexp}
 \exp\Big(-\frac{x\Sigma^{-1}x^T}{2}\Big)
\le \exp\bigg(-\frac{|\widehat x_1|^2+\dots + |\widehat x_k|^2}{4k}\bigg).
\end{equation}
Using \eqref{Eq:detSigma} and \eqref{Eq:UBexp}, with a change of variables, we get that the integral in \eqref{Eq:UBint} is bounded above by
\begin{align*}
C \prod_{j=1}^k \iint_{D(0, \varepsilon_j) \times D(0, 2\widehat{\varepsilon}_j)} \exp\bigg(-\frac{|\widehat x_j+\widehat y_j|^2}{4k}\bigg)dx_j d\widehat x_j.
\end{align*}
Note that the assumption $|z_1-z_2| > 4\varepsilon$ implies $2\varepsilon |\widehat{s}_j - s_j|^{-H} \le |\widehat{y}_j|/2$ for all $j$.
By the triangle inequality, if $\widehat x_j \in D(0, 2\widehat{\varepsilon}_j)$, then
\begin{align*}
|\widehat{y}_j + \widehat{x}_j| \ge |\widehat{y}_j| - \frac{2\varepsilon}{|\widehat{s}_j - s_j|^H} \ge |\widehat{y}_j| - \frac{|\widehat{y}_j|}{2} = \frac{|\widehat{y}_j|}{2},
\end{align*}
which implies
\[ \exp\bigg(-\frac{|\widehat{x}_j+\widehat{y}_j|^2}{4k}\bigg) \le \exp\bigg(-\frac{|\widehat{y}_j|^2}{16k}\bigg).\]
Recall that $\widehat\varepsilon_j = \varepsilon/|\widehat{s}_j - s_j|^{H}$ and $Hd = 1$.
It follows that
\begin{align*}
 \mathbb{P}\bigg( \bigcap_{j=1}^k \big\{ |B_{s_j} - z_1| \le \varepsilon, |B_{\widehat{s}_j} - B_{s_j} - (z_2-z_1)| \le 2\varepsilon \big\} \bigg)
& \le C \prod_{j=1}^k \iint_{D(0, \varepsilon_j) \times D(0, 2\widehat \varepsilon_j)}\exp\bigg( - \frac{|\widehat{y}_j|^2}{16k} \bigg) dx_j d\widehat x_j\\
& \le C' \varepsilon^{2kd} \prod_{j=1}^k\left[\frac{1}{|\widehat{s}_j - s_j|} \exp\Big( - \frac{|\widehat{y}_j|^2}{16k} \Big)\right].
\end{align*}

Returning to our integral S, we have
\begin{align}\label{integralSwithCOV}
S \le C\iint_{\substack{z_1, z_2 \in E,\\|z_1-z_2|>4\varepsilon}}\mu(dz_1)\mu(dz_2) \prod_{j=1}^k \iint_{[2j-1, 2j]^2} \frac{1}{|\widehat{s}_j-s_j|}\exp\bigg(\frac{-|\widehat{y}_j|^2}{16k}\bigg) ds_j d\widehat{s}_j.
\end{align}
Let us write
\begin{align*}
\iint_{[2j-1, 2j]^2}\frac{1}{|\widehat{s}_j-s_j|}\exp\bigg(-\frac{|\widehat{y_j}|^2}{16k}\bigg) ds_jd\widehat{s}_j = L_j+M_j,
\end{align*}
where
\begin{align*}
L_j = \iint_{\substack{s_j, \widehat{s}_j \in [2j-1,2j],\quad\\|\widehat{s}_j-s_j|<|z_1-z_2|^{1/H}}}\frac{1}{|\widehat{s}_j-s_j|}\exp\bigg(-\frac{|\widehat{y}_j|^2}{16k}\bigg)ds_jd\widehat{s}_j
\end{align*}
and
\begin{align*}
M_j = \iint_{\substack{s_j, \widehat{s}_j \in [2j-1,2j],\quad\\|\widehat{s}_j-s_j|\ge|z_1-z_2|^{1/H}}}\frac{1}{|\widehat{s}_j-s_j|}\exp\bigg(-\frac{|\widehat{y}_j|^2}{16k}\bigg)ds_jd\widehat{s}_j.
\end{align*}
For the integral $L_j$, recalling the definition of $\widehat{y}_j$ and noting that the function $y \mapsto |y|^{1/H} \exp(-|y|^2/16k)$ is bounded on $\mathbb{R}^d$, we see that
\begin{align*}
L_j &=\iint_{\substack{s_j, \widehat{s}_j \in [2j-1,2j],\quad\\|\widehat{s}_j-s_j|<|z_1-z_2|^{1/H}}}\frac{1}{|z_1-z_2|^{1/H}}|\widehat{y}_j|^{1/H}\exp\bigg(\frac{-|\widehat{y}_j|^2}{16k}\bigg) ds_jd\widehat{s}_j\\
& \le \iint_{\substack{s_j, \widehat{s}_j \in [2j-1,2j],\quad\\|\widehat{s}_j-s_j|<|z_1-z_2|^{1/H}}}\frac{C}{|z_1-z_2|^{1/H}}ds_jd\widehat{s}_j\\
& \le C'
\end{align*}
and by \eqref{Eq:log_lower_bd}, we have
\[ L_j \le C \log\frac{1}{|z_1-z_2|} . \]
Considering $M_j$, since $\exp(-|y|^2/16k) \le 1$ for all $y \in \mathbb{R}^d$, we get that
\begin{align*}
M_j \le \iint_{\substack{s_j, \widehat{s}_j \in [2j-1,2j],\quad\\ |\widehat{s}_j-s_j|\ge|z_1-z_2|^{1/H}}}\frac{1}{|\widehat{s}_j-s_j|}ds_jd\widehat{s}_j.
\end{align*}
Elementary calculations show that for any $a \in \mathbb{R}$ and $0 < x < 1$,
\begin{equation}\label{int_est_log}
\iint_{\substack{s, \widehat{s} \in [a ,a+1],\\ |\widehat{s}-s| > x}} \frac{1}{|\widehat{s}-s|} ds\, d\widehat{s} = 2\log\frac{1}{x} - 2(1-x). 
\end{equation}
It follows that 
\[ M_j \le \frac{2}{H}\log\frac{1}{|z_1-z_2|}. \]
Then we can combine the estimates for $L_j$ and $M_j$ to deduce from \eqref{integralSwithCOV} that $S$ has following upper bound:
\begin{align}\label{bound_S}
S \le C \iint_{\substack{z_1, z_2 \in E,\\|z_1-z_2|>4\varepsilon}}
\bigg(\log\frac{1}{|z_1-z_2|}\bigg)^k\mu(dz_1)\mu(dz_2).
\end{align}

Let us consider $F$. Assume $|z_1 - z_2| \leq 4\varepsilon$. Again, we first consider the probability 
\[ \mathbb{P}\bigg(\bigcap_{j=1}^k\big\{|B_{s_j}-z_1|\le\varepsilon,|B_{\widehat{s}_j}-z_2|\le\varepsilon\big\}\bigg)\]
with normalized terms. We use the same process from \eqref{Eq:UBint} to \eqref{Eq:UBexp}, and integrate $dx_1, \dots, dx_k$ to find
\begin{align*}
 \mathbb{P}\bigg(\bigcap_{j=1}^k\big\{|B_{s_j}-z_1|\le\varepsilon,|B_{\widehat{s}_j}-z_2|\le\varepsilon\big\}\bigg)
& \le \int\cdots\int_D \frac{1}{(2\pi)^{kd}(\det\Sigma)^{1/2}} \exp\bigg(-\frac{x\Sigma^{-1}x^T}{2} \bigg) dx_1 d\widehat x_1 \cdots dx_k d\widehat x_k\\
& \le C \varepsilon^{kd} \prod_{j=1}^k \int_{D(\widehat y_j, 2\widehat \varepsilon_j)} \exp\bigg( -\frac{|\widehat x_j|^2}{16k}\bigg) d\widehat x_j.
\end{align*}
It follows that
\begin{align}\label{Eq:bound_F}
\begin{aligned}
F \le C\varepsilon^{-kd}\iint_{\substack{z_1, z_2 \in E,\\ |z_1 - z_2| \le 4\varepsilon}} 
\mu(dz_1) \mu(dz_2) \prod_{j=1}^k \iint_{[2j-1, 2j]^2} ds_j d\widehat s_j \int_{D(\widehat y_j, 2\widehat \varepsilon_j)} \exp\bigg( -\frac{|\widehat x_j|^2}{16k}\bigg) d\widehat x_j.
\end{aligned}
\end{align}
Now, we consider the integral
\begin{align*}
\iint_{[2j-1, 2j]^2} ds_j d\widehat s_j \int_{D(\widehat y_j, 2\widehat \varepsilon_j)} \exp\bigg( -\frac{|\widehat x_j|^2}{16k}\bigg) d\widehat x_j
\end{align*}
as a sum of two integrals, for one which takes values when the variables $\widehat{s}_j$ and $s_j$ are restricted such that $|\widehat{s}_j-s_j|>\varepsilon^d$, and the other takes values when the variables $\widehat{s}_j$ and $s_j$ are restricted such that $|\widehat{s}_j-s_j| \leq \varepsilon^d$.
Then,
\begin{align*}
& \iint_{[2j-1, 2j]^2} ds_j d\widehat s_j \int_{D(\widehat y_j, 2\widehat \varepsilon_j)} \exp\bigg( -\frac{|\widehat x_j|^2}{16k}\bigg) d\widehat x_j\\
& = \iint\limits_{\substack{s_j, \widehat{s}_j \in [2j-1,2j],\\|\widehat{s}_j-s_j|>\varepsilon^d}}ds_jd\widehat{s}_j\int_{D(\widehat{y}_j,2\widehat{\varepsilon}_j)}\exp\bigg(-\frac{|\widehat{x}_j|^2}{16k}\bigg)d\widehat{x}_j + \iint\limits_{\substack{s_j, \widehat{s}_j \in [2j-1,2j],\\|\widehat{s}_j-s_j|\le\varepsilon^d}}ds_jd\widehat{s}_j\int_{D(\widehat{y}_j,2\widehat{\varepsilon}_j)}\exp\bigg(-\frac{|\widehat{x}_j|^2}{16k}\bigg)d\widehat{x}_j .
\end{align*}
For the first integral, we use the fact that $\exp(-|x|^2/16k) \le 1$ for all $x$, and the Lebesgue measure of the ball $D(\widehat{y}_j, 2\widehat{\varepsilon}_j)$ is $C {\widehat{\varepsilon_j}}^d = C\varepsilon^d/|\widehat{s}_j - s_j|$ (recall that $\widehat\varepsilon_j = \varepsilon/|\widehat{s}_j - s_j|^{H}$ and $Hd = 1$). For the second integral, we use $\int_{\mathbb{R}^d} \exp(-|x|^2/16k) dx < \infty$.
We get that
\begin{align*}
&\iint_{[2j-1, 2j]^2} ds_j d\widehat s_j \int_{D(\widehat y_j, 2\widehat \varepsilon_j)} \exp\bigg( -\frac{|\widehat x_j|^2}{16k}\bigg) d\widehat x_j\\
&\le \iint_{\substack{s_j, \widehat{s}_j \in [2j-1,2j],\\|\widehat{s}_j-s_j|>\varepsilon^d}} \frac{C\varepsilon^d}{|\widehat{s}_j-s_j|}ds_jd\widehat{s}_j + \iint_{\substack{s_j, \widehat{s}_j \in [2j-1,2j],\\|\widehat{s}_j-s_j|\leq\varepsilon^d}}C ds_jd\widehat{s}_j.
\end{align*}
Then by \eqref{int_est_log} and \eqref{Eq:log_lower_bd}, and also the assumption that $|z_1 - z_2| \le 4\varepsilon$, we have
\begin{align*}
&\iint_{[2j-1, 2j]^2} ds_j d\widehat s_j \int_{D(\widehat y_j, 2\widehat \varepsilon_j)} \exp\bigg( -\frac{|\widehat x_j|^2}{16k}\bigg) d\widehat x_j\\
& \le C\varepsilon^d\log\frac{1}{\varepsilon} + C\varepsilon^d
 \le C'\varepsilon^d\log\frac{1}{|z_1 - z_2|}.
\end{align*}
It follows that
\begin{align}\label{bound_F}
F \le C \iint_{\substack{z_1, z_2 \in E,\\|z_1-z_2|\le4\varepsilon}} \bigg(\log \frac{1}{|z_1-z_2|} \bigg)^k \mu(dz_1) \mu(dz_2).
\end{align}

Now consider $F + S$. Combining \eqref{bound_S} and \eqref{bound_F}, we have
\[ \mathbb{E}(I_\varepsilon^2)=F+S\le C\int_E \int_E \bigg(\log\frac{1}{|z_1-z_2|}\bigg)^k \mu(dz_1)\mu(dz_2) =: C_4. \]
By \eqref{assump}, $C_4$ is a finite constant.
Given the lower bound of $\mathbb{E}(I_\varepsilon)^2$ and the upper bound of $\mathbb{E}(I_\varepsilon^2)$, we deduce from \eqref{Eq:CS} that
\[ \mathbb{P}(I_\varepsilon>0) \ge C_3/C_4 > 0 \]
for all small $\varepsilon > 0$.
Because $C_3$ and $C_4$ are independent of $\varepsilon$, we can let $\varepsilon \rightarrow 0$ to conclude that the event that $E$ contains a $k$-tuple point $z = B_{s_1} =\cdots = B_{s_k}$ for some $s_1 \in [1, 2], s_2 \in [3, 4], \dots, s_k \in [2k-1, 2k]$ has positive probability. 
\end{proof}

\section{Multiple Points of Fractional Brownian Motion with $Hd > 1$}

The following result is the analogue of Theorem \ref{ktuplepointsHd=1} for $Hd > 1$.

\begin{thm}
\label{ktupleHd>1}
Let $B$ be a fractional Brownian motion in $\mathbb{R}^d$ with Hurst index $H$ such that $Hd>1$. If $E$ is a fixed, nonempty compact set in $\mathbb{R}^d$ with positive capacity with respect to the function $\phi(s) = s^{-k(d-1/H)}$, then $E$ contains $k$-tuple points with positive probability.
\end{thm}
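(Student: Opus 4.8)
The plan is to run the proof of Theorem \ref{ktuplepointsHd=1} almost verbatim, keeping the same functional $I_\varepsilon$ and the same Cauchy--Schwarz inequality \eqref{Eq:CS}, and to isolate the single place where the exponent $Hd$ enters, namely the time-integral estimates in the second moment. First I would record the scaling lemma analogous to Lemma \ref{cap_scaling}. For $\phi(s) = s^{-k(d-1/H)}$ one has $\phi(\lambda s) = \lambda^{-k(d-1/H)}\phi(s)$, so the $\phi$-energy of $\mu_\lambda$ on $\lambda E$ is exactly $\lambda^{-k(d-1/H)}$ times the $\phi$-energy of $\mu$ on $E$; positivity of capacity is thus preserved under dilation (the power-law case is even simpler than the logarithmic one), and I may again assume $E$ lies in a ball of radius $1/3$ about the origin. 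Since $|z_1-z_2|$ is then bounded, $|z_1-z_2|^{-k(d-1/H)}$ has a positive lower bound on $E\times E$, the analogue of \eqref{Eq:log_lower_bd}. The first-moment bound $\mathbb{E}(I_\varepsilon)^2 \ge C_3 > 0$ carries over without change, since the estimate $\mathbb{P}(\bigcap_j\{|B_{s_j}-z|\le\varepsilon\}) \ge C\varepsilon^{kd}$ uses only the strong local nondeterminism bounds on $\det\Sigma_0$ and its smallest eigenvalue together with $|z|+\varepsilon<1$, none of which depends on the value of $Hd$.

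The whole task is therefore to re-estimate $\mathbb{E}(I_\varepsilon^2) = F + S$ with the splitting at $|z_1-z_2| = 4\varepsilon$. For $S$, the normalization and the Gershgorin step \eqref{Eq:UBint}--\eqref{Eq:UBexp} are unchanged, but now the ball $D(\widehat y_j, 2\widehat\varepsilon_j)$ has volume $C\widehat\varepsilon_j^d = C\varepsilon^d|\widehat s_j - s_j|^{-Hd}$, so the joint probability is bounded by $C\varepsilon^{2kd}\prod_{j=1}^k |\widehat s_j - s_j|^{-Hd}\exp(-|\widehat y_j|^2/16k)$. I would split each time integral at $|\widehat s_j - s_j| = |z_1-z_2|^{1/H}$ exactly as in the $Hd=1$ proof, into $L_j$ and $M_j$. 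On $L_j$ I rewrite $|\widehat s_j - s_j|^{-Hd} = |z_1-z_2|^{-d}|\widehat y_j|^{d}$ and use the boundedness of $|y|^{d}e^{-|y|^2/16k}$ together with the band area $\le C|z_1-z_2|^{1/H}$ to get $L_j \le C|z_1-z_2|^{-d}\cdot|z_1-z_2|^{1/H} = C|z_1-z_2|^{-(d-1/H)}$. On $M_j$ the elementary estimate replacing \eqref{int_est_log} is $\iint_{s,\widehat s\in[a,a+1],\,|\widehat s - s|\ge x} |\widehat s - s|^{-Hd}\,ds\,d\widehat s \le Cx^{1-Hd}$ for $Hd>1$, and with $x=|z_1-z_2|^{1/H}$ this is again $C|z_1-z_2|^{-(d-1/H)}$ because $(Hd-1)/H = d-1/H$. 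Multiplying the $k$ factors gives $S \le C\iint_{|z_1-z_2|>4\varepsilon} |z_1-z_2|^{-k(d-1/H)}\mu(dz_1)\mu(dz_2)$.

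For $F$, after integrating out the variables $x_j$ as in the $Hd=1$ case, I would split each time integral at $|\widehat s_j - s_j| = \varepsilon^{1/H}$, the scale at which $\widehat\varepsilon_j \asymp 1$. On $\{|\widehat s_j - s_j| > \varepsilon^{1/H}\}$ the Gaussian integral is bounded by the ball volume $C\varepsilon^d|\widehat s_j - s_j|^{-Hd}$, and the same power estimate contributes $\le C\varepsilon^d(\varepsilon^{1/H})^{1-Hd} = C\varepsilon^{1/H}$; on $\{|\widehat s_j - s_j| \le \varepsilon^{1/H}\}$ the Gaussian integral is bounded by $\int_{\mathbb{R}^d} e^{-|x|^2/16k}\,dx$ and the region has area $\le C\varepsilon^{1/H}$, again $\le C\varepsilon^{1/H}$. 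The $k$ factors thus contribute $\varepsilon^{k/H}$, which combined with the prefactor $\varepsilon^{-kd}$ gives $\varepsilon^{-k(d-1/H)}$; bounding $\varepsilon^{-k(d-1/H)} \le C|z_1-z_2|^{-k(d-1/H)}$ on $\{|z_1-z_2|\le 4\varepsilon\}$ yields $F \le C\iint_{|z_1-z_2|\le 4\varepsilon}|z_1-z_2|^{-k(d-1/H)}\mu(dz_1)\mu(dz_2)$. Adding $F$ and $S$ bounds $\mathbb{E}(I_\varepsilon^2)$ by a constant multiple of the $\phi$-energy of $\mu$, which is finite by hypothesis, and \eqref{Eq:CS} then closes the argument exactly as before, giving $\mathbb{P}(I_\varepsilon>0)\ge C_3/C_4>0$ uniformly in small $\varepsilon$, whence $E$ contains a $k$-tuple point with positive probability.

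The main obstacle is entirely in the time-integral estimates rather than in any new probabilistic input: one must check that the singularity $|\widehat s_j - s_j|^{-Hd}$ integrates to produce precisely the exponent $d-1/H$ that matches the capacity gauge $\phi(s)=s^{-k(d-1/H)}$. This is exactly where the hypothesis $Hd>1$ is used, since it makes $\int_x^1 u^{-Hd}\,du$ diverge as $x\to 0$ at the rate $x^{1-Hd}$, producing a power-law contribution; in the borderline case $Hd=1$ the same integral diverges only logarithmically, which is why the gauge there is $(\log_+(1/s))^k$ instead.
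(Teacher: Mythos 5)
Your proposal is correct and follows essentially the same route as the paper's proof: the same functional $I_\varepsilon$ with Cauchy--Schwarz, the same $F+S$ decomposition at $|z_1-z_2|=4\varepsilon$, the same splitting of the time integrals at $|\widehat{s}_j-s_j|=|z_1-z_2|^{1/H}$ for $S$ and at $|\widehat{s}_j-s_j|=\varepsilon^{1/H}$ for $F$, and the same power-law estimate $\iint_{|\widehat{s}-s|>x}|\widehat{s}-s|^{-Hd}\,ds\,d\widehat{s}\le Cx^{1-Hd}$ replacing the logarithmic one. The only (immaterial) difference is bookkeeping in the $F$ bound, where you take the product of the $k$ factors before converting $\varepsilon^{-k(d-1/H)}$ into $|z_1-z_2|^{-k(d-1/H)}$ rather than converting each factor separately.
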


\begin{proof}
It is easy to see that $E$ has positive $\phi$-capacity if and only if $\lambda E$ has positive $\phi$-capacity for all $\lambda > 0$, so we can assume that $E$ is contained in a ball of radius $1/3$ such that $|z_1-z_2| \le 2/3 < 1$.
We can find a probability measure $\mu$ on $E$ such that
\begin{align}\label{finite_energy2}
\int_E\int_E \frac{1}{|z_1-z_2|^{k(d-1/H)}} \mu(dz_1)\mu(dz_2) < \infty.
\end{align}
As before, for small $\varepsilon > 0$, we consider the integral
\begin{align*}
I_{\varepsilon}= \frac{1}{\varepsilon^{kd}}\int_{E}\mu(dz) \prod_{j=1}^k \int_{2j-1}^{2j}ds_j \textbf{K}_\varepsilon({|B_{s_j}-z|}).
\end{align*}
We prove a lower bound for $\mathbb{E}(I_\varepsilon)$ and an upper bound for $\mathbb{E}(I_\varepsilon^2)$.
The proof follows as in Theorem \ref{ktuplepointsHd=1} with changes after \eqref{integralSwithCOV}. 
With $Hd > 1$, \eqref{integralSwithCOV} now becomes 
\begin{align}\label{Eq:bound_S}
S \le C\iint_{\substack{z_1, z_2 \in E,\\|z_1-z_2|>4\varepsilon}}\mu(dz_1)\mu(dz_2) \prod_{j=1}^k \iint_{[2j-1, 2j]^2} \frac{1}{|\widehat{s}_j-s_j|^{Hd}}\exp\bigg(\frac{-|\widehat{y}_j|^2}{16k}\bigg) ds_j d\widehat{s}_j.
\end{align}
Splitting the integral in $ds_j d\widehat s_j$ into $L_j + M_j$ as before, we get that
\begin{align}\label{L+M}
\notag&\iint_{[2j-1, 2j]^2} \frac{1}{|\widehat{s}_j-s_j|^{Hd}}\exp\bigg(\frac{-|\widehat{y}_j|^2}{16k}\bigg) ds_j d\widehat{s}_j\\
\notag& = \iint_{\substack{s_j, \widehat s_j \in [2j-1, 2j],\quad\\|\widehat s_j - s_j| < |z_1 - z_2|^{1/H}}} \frac{ds_j d\widehat{s}_j }{|\widehat{s}_j-s_j|^{Hd}}\exp\bigg(\frac{-|\widehat{y}_j|^2}{16k}\bigg) + \iint_{\substack{s_j, \widehat s_j \in [2j-1, 2j],\quad\\|\widehat s_j - s_j| \ge |z_1 - z_2|^{1/H}}} \frac{ds_j d\widehat{s}_j }{|\widehat{s}_j-s_j|^{Hd}}\exp\bigg(\frac{-|\widehat{y}_j|^2}{16k}\bigg) \\
& \le \iint_{\substack{s_j, \widehat s_j \in [2j-1, 2j],\quad\\|\widehat s_j - s_j| < |z_1 - z_2|^{1/H}}} \frac{ds_j d\widehat{s}_j }{|z_1 - z_2|^{d}}|\widehat y_j|^d\exp\bigg(\frac{-|\widehat{y}_j|^2}{16k}\bigg) + \iint_{\substack{s_j, \widehat s_j \in [2j-1, 2j],\quad\\|\widehat s_j - s_j| \ge |z_1 - z_2|^{1/H}}} \frac{ds_j d\widehat{s}_j }{|\widehat{s}_j-s_j|^{Hd}}.
\end{align}
Since the function $y \mapsto |y|^d \exp(-|y|^2/16k)$ is bounded on $\mathbb{R}^d$, the first integral in \eqref{L+M} is 
\[ \le \frac{C}{|z_1-z_2|^{d-1/H}}. \]
Elementary calculations show that for any $a \in \mathbb{R}$, $0 < x < 1$ and $Hd > 1$,
\begin{align}\label{int_est_power}
\iint_{\substack{s, \widehat{s} \in [a, a+1],\\ |\widehat{s}-s| > x}} \frac{1}{|\widehat{s}-s|^{Hd}} ds \, d\widehat{s}
&= \frac{2(Hd-1)^{-1}}{x^{Hd-1}} + \frac{2(2-Hd)^{-1}}{x^{Hd-2}} + \frac{2}{(1-Hd)(2-Hd)},
\end{align}
which is bounded by $C/x^{Hd-1}$ for some constant $C$.
So the second integral in \eqref{L+M} is 
\[ \le \frac{C}{(|z_1-z_2|^{1/H})^{Hd-1}} = \frac{C}{|z_1-z_2|^{d-1/H}}. \]
It follows that
\[ \iint_{[2j-1, 2j]^2} \frac{1}{|\widehat{s}_j-s_j|^{Hd}}\exp\bigg(\frac{-|\widehat{y}_j|^2}{16k}\bigg) ds_j d\widehat{s}_j \le \frac{C}{|z_1-z_2|^{d-1/H}}, \]
and hence
\begin{align*}
S \le C \iint_{\substack{z_1, z_2 \in E,\\|z_1-z_2| > 4\varepsilon}} \frac{1}{|z_1-z_2|^{k(d-1/H)}} \mu(dz_1) \mu(dz_2).
\end{align*}

For $F$, we consider $|z_1 - z_2| \le 4\varepsilon$. The bound \eqref{Eq:bound_F} is still valid. For the integral
\begin{align*}
\iint_{[2j-1, 2j]^2} ds_j d\widehat s_j \int_{D(\widehat y_j, 2\widehat \varepsilon_j)} \exp\bigg( -\frac{|\widehat x_j|^2}{16k}\bigg) d\widehat x_j
\end{align*}
in \eqref{Eq:bound_F}, we consider it as the sum of two integrals over $|\widehat{s}_j - s_j| > \varepsilon^{1/H}$ and 
$|\widehat{s}_j - s_j| \le \varepsilon^{1/H}$:
\begin{align*}
& \iint_{[2j-1, 2j]^2} ds_j d\widehat s_j \int_{D(\widehat y_j, 2\widehat \varepsilon_j)} \exp\bigg( -\frac{|\widehat x_j|^2}{16k}\bigg) d\widehat x_j\\
& = \iint\limits_{\substack{s_j, \widehat s_j \in [2j-1, 2j],\\|\widehat s_j - s_j|> \varepsilon^{1/H}}} ds_j d\widehat s_j \int_{D(\widehat y_j, 2\widehat \varepsilon_j)} \exp\bigg( -\frac{|\widehat x_j|^2}{16k}\bigg) d\widehat x_j + \iint\limits_{\substack{s_j, \widehat s_j \in [2j-1, 2j],\\ |\widehat s_j - s_j| \le \varepsilon^{1/H}}} ds_j d\widehat s_j \int_{D(\widehat y_j, 2\widehat \varepsilon_j)} \exp\bigg( -\frac{|\widehat x_j|^2}{16k}\bigg) d\widehat x_j.
\end{align*}
For the first term, we bound the exponential function by 1 and integrate over the ball $D(\widehat{y}_j, 2\widehat{\varepsilon}_j)$. For the second term, we bound it by replacing the ball $D(\widehat{y}_j, 2\widehat{\varepsilon}_j)$ by all of $\mathbb{R}^d$ and note that $\int_{\mathbb{R}^d} \exp(-|\widehat{x}_j|^2/16k) d\widehat{x}_j$ is finite. It follows that
\begin{align*}
&\iint_{[2j-1, 2j]^2} ds_j d\widehat s_j \int_{D(\widehat y_j, 2\widehat \varepsilon_j)} \exp\bigg( -\frac{|\widehat x_j|^2}{16k}\bigg) d\widehat x_j\\
& \le \iint_{\substack{s_j, \widehat{s}_j \in [2j-1,2j],\\|\widehat{s}_j - s_j|> \varepsilon^{1/H}}}  \frac{C\varepsilon^d}{|\widehat{s}_j-s_j|^{Hd}}ds_j d\widehat{s}_j
+ \iint_{\substack{s_j, \widehat{s}_j \in [2j-1,2j],\\|\widehat{s}_j - s_j|\le \varepsilon^{1/H}}}C ds_1 d\widehat{s}_j.
\end{align*}
Then by \eqref{int_est_power} and $|z_1-z_2| \le 4\varepsilon$, 
\begin{align*}
&\iint_{[2j-1, 2j]^2} ds_j d\widehat s_j \int_{D(\widehat y_j, 2\widehat \varepsilon_j)} \exp\bigg( -\frac{|\widehat x_j|^2}{16k}\bigg) d\widehat x_j\\
& \le  \frac{C\varepsilon^d}{\varepsilon^{d-1/H}} + C\varepsilon^{1/H}\\
& \le \frac{C'\varepsilon^d}{|z_1-z_2|^{d-1/H}}.
\end{align*}
Hence
\begin{align*}
F \le C \iint_{\substack{z_1, z_2 \in E,\\|z_1-z_2| \le 4\varepsilon}} \frac{1}{|z_1-z_2|^{k(d-1/H)}} \mu(dz_1)\mu(dz_2).
\end{align*}

Therefore, we obtain the following bound for $\mathbb{E}(I_\varepsilon^2)$:
\[ \mathbb{E}(I_\varepsilon^2) = S + F \le C \int_E \int_E \frac{1}{|z_1-z_2|^{k(d-1/H)}} \mu(dz_1)\mu(dz_2). \]
This bound is finite by \eqref{finite_energy2} and is independent of $\varepsilon$. The rest of the proof is the same as in Theorem \ref{ktuplepointsHd=1}.
\end{proof}


\section{Acknowledgements}
We would like to thank Dr.~Yimin Xiao for his guidance in the direction of our research project. 
This paper was written as a part of the SURIEM REU at Michigan State University, and we would finally like to thank Dr. Robert Bell for his contributions to organizing this program. The SURIEM REU is supported by MSU, the NSA, and the NSF.

\end{document}